\documentclass{amsart}
\usepackage[T1]{fontenc}
\usepackage[utf8]{inputenc}
\usepackage{lmodern}
\usepackage{microtype}
\usepackage{amsmath,amssymb,amsthm}
\usepackage[hidelinks]{hyperref}
\allowdisplaybreaks
\numberwithin{equation}{section}
\theoremstyle{plain}
\newtheorem{theorem}{Theorem}[section]
\newtheorem{proposition}[theorem]{Proposition}
\newtheorem{lemma}[theorem]{Lemma}
\newtheorem{corollary}[theorem]{Corollary}
\theoremstyle{remark}
\newtheorem{remark}[theorem]{Remark}
\DeclareMathOperator{\reg}{reg}
\DeclareMathOperator{\topdeg}{end}
\DeclareMathOperator{\NP}{NP}
\DeclareMathOperator{\conv}{conv}
\newcommand{\sat}{\mathrm{sat}}
\newcommand{\height}{\operatorname{height}}
\newcommand{\depth}{\operatorname{depth}}

\begin{document}

\title[Regularity under integral closure]
{Regularity under Integral Closure: A General Criterion and a Sharp Counterexample}

\author[Soumyadeep Misra]{Soumyadeep Misra}
\address{Department of Mathematics, University of Kansas, Lawrence, KS, USA}
\email{soumyadeep@ku.edu}

\subjclass[2020]{Primary 13B22; Secondary 13D02, 13F20, 14Q20}

\keywords{Castelnuovo--Mumford regularity, integral closure, saturation,
local cohomology, monomial ideals, Newton polyhedra}

\begin{abstract}
We study the behavior of Castelnuovo--Mumford regularity under integral closure. We prove a general cohomological criterion for the inequality $\reg(\overline J)\leq \reg(J)$, and in particular obtain it for every homogeneous ideal in a polynomial ring in at most three variables. The proof uses a high-degree comparison between $\overline J$  and its saturation. We also give a saturated monomial counterexample in four variables, showing that the three-variable result is sharp.

\end{abstract}
\maketitle

\section{Introduction}

Let $S=K[x_1,\ldots,x_n]$ be a standard graded polynomial ring over a field $K$, and let $J\subseteq S$ be a homogeneous ideal. We write $\overline J$ for the integral closure of $J$ and $\reg(J)$ for its Castelnuovo--Mumford regularity. The motivation for this paper comes from a conjecture of K\"uronya and Pintye concerning regularity and log-canonical thresholds. They conjectured that
\[
\reg(\overline{\mathcal I})\leq \reg(\mathcal I)
\]
for ideal sheaves on projective space; see \cite[Conjecture~12]{KuronyaPintye}. For homogeneous ideals in a polynomial ring, this naturally leads to the problem of comparing $\reg(J)$ and $\reg(\overline J)$. Positive results are known for several classes of monomial ideals, including ideals in small numbers of variables and monomial complete intersections; see \cite{CuiGongZhu,Javadekar}.

In this paper we establish a general cohomological criterion, valid in arbitrary dimension, under which integral closure does not increase regularity. As a consequence, we prove the inequality for every homogeneous ideal in a polynomial ring in at most three variables. We also show that this is sharp with respect to the number of variables by constructing a simple saturated monomial ideal in four variables for which integral closure increases regularity.

For a finitely generated graded $S$-module $M$, set
$a_i(M)=\topdeg H^i_{\mathfrak m}(M)$. Our main result is the following.

\begin{theorem}\label{thm:criterion-main}
Let $S=K[x_1,\ldots,x_n]$, let $0\neq J\subsetneq S$ be homogeneous, and
set
\[
        L=\overline J,\qquad I=L^{\sat},\qquad d=\dim(S/I).
\]
Suppose that $I\neq S$ and
\[
        \reg(S/I)=a_d(S/I)+d.
\]
Then $\reg(\overline J)\leq\reg(J)$.
\end{theorem}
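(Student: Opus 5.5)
The plan is to split $\reg(S/L)$ into a saturation part and a top-cohomology part, and to control each by $\reg(S/J)=\reg(J)-1$. Since $I=L^{\sat}$, we have $H^0_{\mathfrak m}(S/L)=I/L$ and $H^i_{\mathfrak m}(S/L)\cong H^i_{\mathfrak m}(S/I)$ for $i\geq 1$. Because $H^0_{\mathfrak m}(S/I)=0$, this gives
\[
\reg(S/L)=\max\{\topdeg(I/L),\ \reg(S/I)\}.
\]
It therefore suffices to prove two inequalities: $\reg(S/I)\leq\reg(S/J)$ and $\topdeg(I/L)\leq \reg(S/J)$. The second one says that $L_t=I_t$ for all $t\geq\reg(J)$.

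\textbf{Step 1 (the $\reg(S/I)$ part).} Here the hypothesis $\reg(S/I)=a_d(S/I)+d$ enters. Integral closure does not change the radical, so $\sqrt J=\sqrt L=\sqrt I$ and $\dim S/J=d$. The surjection $S/J\to S/I$ has a kernel of dimension at most $d$. Since $H^d_{\mathfrak m}$ is right exact on modules of dimension $\leq d$ (as $H^{d+1}_{\mathfrak m}$ vanishes on them), the induced map $H^d_{\mathfrak m}(S/J)\to H^d_{\mathfrak m}(S/I)$ is surjective. Hence
\[
a_d(S/I)\leq a_d(S/J)\leq \reg(S/J)-d,
\]
and so $\reg(S/I)=a_d(S/I)+d\leq\reg(S/J)$. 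This step is routine once the hypothesis is used.

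\textbf{Step 2 (high-degree comparison, the main obstacle).} Set $r=\reg(J)$. We must show $I_t\subseteq \overline J$ for every $t\geq r$. We know $J$ is generated in degrees $\leq r$ and $J_t=(J^{\sat})_t$ for $t\geq r$. Hence $J_{\geq r}=J_rS=J^{\sat}\cap\mathfrak m^r$ with the same degree-$\geq r$ part. Fix a homogeneous $f\in I_t$ with $t\geq r$. Then $f\mathfrak m^k\subseteq\overline J$ for some $k$, and I would test $f\in\overline J$ via the valuative criterion, checking $v(f)\geq v(J)$ for each Rees valuation $v$ of $J$ (which may be taken $\mathbb Z$-graded and nonnegative).

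If the center of $v$ is not $\mathfrak m$, some $x_i$ is a unit there, so $v(x_i)=0$. Then $v(f)=v(fx_i^k)\geq v(J)$.

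If the center of $v$ is $\mathfrak m$, I would use the degree bound instead. Since $f\in\mathfrak m^t$, we get $v(f)\geq t\,v(\mathfrak m)$. On the other side, $v(J)\leq v(J_r)$, and the aim is to compare $v(J_r)$ with $r\,v(\mathfrak m)$, exploiting that $J_r=(J^{\sat})_r$ is a full saturated graded piece. I expect this comparison, namely controlling the $\mathfrak m$-centered Rees valuations through the equality $J_{\geq r}=(J^{\sat})_{\geq r}$, to be the hardest point. My first attempt would be to write $f\mathfrak m^{k}\subseteq \overline J$ in degree $t+k$, and then use that $\mathfrak m^k$ times the degree-$\ge r$ part of $\overline{J^{\sat}}$ lands in $\overline{J_{\geq r}}$ to pass the factor $\mathfrak m^k$ back. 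This is an argument of the type "$\overline{J_{\ge r}}:\mathfrak m^\infty$ agrees with $\overline{J_{\ge r}}$ in degrees $\ge r$". If it fails in general, I would fall back to bounding $\topdeg(I/L)$ by $a_0$ and $a_1$ of $S/J$ through the natural map $H^1_{\mathfrak m}(J)\to H^1_{\mathfrak m}(L)$.

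Combining Steps 1 and 2 gives $\reg(S/L)\leq\reg(S/J)$, hence $\reg(\overline J)\leq\reg(J)$.
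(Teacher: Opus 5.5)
Your saturation split $\reg(S/L)=\max\{\topdeg(I/L),\reg(S/I)\}$ is Lemma~\ref{lem:saturation-formula}, and your Step~1 is exactly Lemma~\ref{lem:top-cohomology-monotonicity}. Both are fine. The gap is Step~2. This is the real content of the theorem (the paper's Lemma~\ref{lem:high-degree-agreement-general}), and you leave it unproved.

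Worse, the route you outline for $\mathfrak m$-centered valuations cannot work. It uses only $f\in\mathfrak m^t$, so it needs $v(J)\le r\,v(\mathfrak m)$, and that inequality is false in general. Take $J=(x^2,xy^2)=x(x,y^2)\subseteq K[x,y]$:
\begin{itemize}
\item $r=\reg(J)=3$ and $J^{\sat}=(x)$, so $J_3=(J^{\sat})_3$ holds as you wish.
\item The monomial valuation with $v(x)=2$, $v(y)=1$ is a Rees valuation of $J$ (inner normal of the compact edge of the Newton polygon), and it is centered at $\mathfrak m$.
\item Yet $v(J)=v(J_3)=4>3=r\,v(\mathfrak m)$. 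For instance $y^3\in\mathfrak m^3$ has $v(y^3)=3$.
\end{itemize}
So the membership $f\in I$ must enter the $\mathfrak m$-centered case in a way that depends on $v$. Your fallback ideas do not close the gap either. The ``first attempt'' just restates the claim for $J_{\ge r}$. The map $H^1_{\mathfrak m}(J)\to H^1_{\mathfrak m}(L)$, i.e.\ $J^{\sat}/J\to I/L$, bounds $\topdeg(I/L)$ only if it is surjective in high degrees, which is again the claim.

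The missing idea is to dehomogenize at a variable of minimal value.
\begin{enumerate}
\item Given $v$, choose $j$ with $v(x_j)=\min_k v(x_k)$. Then $v\ge 0$ on $A=K[x_i/x_j: i\ne j]$.
\item Let $g_i$ be the minimal generators of $J$, of degrees $e_i$, and set $J^{(j)}=(g_i/x_j^{e_i})\subseteq A$.
\item From $x_j^N f\in\overline J$ you get $f/x_j^t\in\overline{J^{(j)}}$. This uses that integral closure commutes with inverting $x_j$ and contracts from $S_{x_j}=A[x_j^{\pm1}]$ to $A$.
\item The forward valuative inequality holds even when $v$ is trivial on $\operatorname{Frac}(A)$. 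It gives, for some $i$, $v(f)-t\,v(x_j)\ge v(g_i)-e_i\,v(x_j)$.
\item Hence $v(f)\ge v(g_i)+(t-e_i)\,v(x_j)\ge v(J)$ whenever $t\ge\max_i e_i$.
\end{enumerate}
This handles both of your cases at once; your Case~1 is the case $v(x_j)=0$. It also yields $I_t=L_t$ already for $t\ge\delta(J)$, and $\delta(J)\le\reg(J)$ is all the saturation formula needs.
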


The hypothesis says that the regularity of $S/I$ is detected by its top local cohomology. Cohen--Macaulay quotients satisfy this condition, giving the
following consequence.

\begin{corollary}\label{cor:cm-criterion-main}
With the notation of Theorem~\ref{thm:criterion-main}, suppose that $I\neq S$
and $S/I$ is Cohen--Macaulay. Then
\[
        \reg(\overline J)\leq\reg(J).
\]
\end{corollary}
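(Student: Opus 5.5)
The corollary should follow from Theorem~\ref{thm:criterion-main} once we check that its hypothesis $\reg(S/I)=a_d(S/I)+d$ holds automatically when $S/I$ is Cohen--Macaulay. The plan is to unwind the local-cohomology description of regularity and observe that only one index contributes.

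Recall that for a finitely generated graded $S$-module $M$,
\[
\reg(M)=\max\{\,a_i(M)+i \;:\; H^i_{\mathfrak m}(M)\neq 0\,\}.
\]
By Grothendieck's vanishing and nonvanishing theorems, $H^i_{\mathfrak m}(M)=0$ for $i>\dim M$, and $H^{\dim M}_{\mathfrak m}(M)\neq 0$ when $M\neq 0$. We apply this to $M=S/I$, which is a nonzero module because $I\neq S$. Its dimension is $d=\dim(S/I)$ by definition.

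Now suppose $S/I$ is Cohen--Macaulay, so that $\depth(S/I)=d$. Then $H^i_{\mathfrak m}(S/I)=0$ for all $i<d$ as well. So $H^d_{\mathfrak m}(S/I)$ is the only nonvanishing local cohomology module, and it is nonzero. Its end degree $a_d(S/I)$ is finite, since $H^d_{\mathfrak m}(S/I)$ is Artinian and hence vanishes in large degrees. The maximum in the formula above therefore has exactly one term, and
\[
\reg(S/I)=a_d(S/I)+d.
\]
This is precisely the hypothesis of Theorem~\ref{thm:criterion-main}. The other hypotheses, $0\neq J\subsetneq S$ and $I\neq S$, are assumed in the corollary. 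Theorem~\ref{thm:criterion-main} then gives $\reg(\overline J)\leq\reg(J)$.

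I do not expect any real obstacle. The one thing to get right is the edge case $d=0$. In that case $S/I$ is Artinian and automatically Cohen--Macaulay. Since $I$ is saturated, however, $H^0_{\mathfrak m}(S/I)=0$, which is incompatible with $S/I\neq 0$ having dimension $0$. So in fact $d\geq 1$ whenever $I\neq S$, and the argument above applies without change.
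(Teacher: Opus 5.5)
Your proof is correct and follows the paper's route: the paper also deduces the corollary from Theorem~\ref{thm:criterion-main} by observing that a Cohen--Macaulay quotient has only its top local cohomology nonzero, so $\reg(S/I)=a_d(S/I)+d$. Your separate treatment of $d=0$ is valid but not needed, since for an Artinian quotient the identity $\reg(S/I)=a_0(S/I)$ holds in any case.
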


The criterion applies automatically in three variables.

\begin{corollary}\label{thm:three-vars-main}
Let $S=K[x,y,z]$ and let $J\subseteq S$ be homogeneous. Then
\[
        \reg(\overline J)\leq \reg(J).
\]
\end{corollary}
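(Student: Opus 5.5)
Corollary~\ref{thm:three-vars-main} will follow from Theorem~\ref{thm:criterion-main}, so the plan is to check its hypotheses for every nonzero proper homogeneous $J\subseteq K[x,y,z]$. The degenerate cases come first. If $J=0$ or $J=S$, then $\overline J=J$ and there is nothing to prove. Next, set $L=\overline J$ and $I=L^{\sat}$. If $I=S$, then $L$ is $\mathfrak m$-primary, so $\overline J$ is $\mathfrak m$-primary, and hence $J$ is $\mathfrak m$-primary too, because $\sqrt{J}=\sqrt{\overline J}$. In this case I argue directly. The regularity of an $\mathfrak m$-primary ideal is $a_0(S/J)+1$, i.e.\ one more than the top degree of the socle-type part. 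Since $J\subseteq\overline J$, every degree in which $S/\overline J$ is nonzero is a degree in which $S/J$ is nonzero, so $\topdeg(S/\overline J)\le\topdeg(S/J)$ and therefore $\reg(\overline J)\le\reg(J)$. (If the paper already handles $I=S$ elsewhere, I would cite that instead.)

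The main case is $I\neq S$, where I verify $\reg(S/I)=a_d(S/I)+d$ with $d=\dim S/I\in\{1,2\}$. Since $I$ is saturated, $\depth(S/I)\ge1$, so $H^0_{\mathfrak m}(S/I)=0$.

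If $d=1$, then $S/I$ is a one-dimensional ring of positive depth, so it is Cohen--Macaulay. The only nonvanishing local cohomology is $H^1_{\mathfrak m}$, which gives $\reg(S/I)=a_1(S/I)+1$.

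If $d=2$, the possible nonzero modules are $H^1_{\mathfrak m}(S/I)$ and $H^2_{\mathfrak m}(S/I)$, so I must show $a_1(S/I)+1\le a_2(S/I)+2$, i.e.\ $a_1\le a_2+1$. This is the main obstacle. First, $\height I=1$ in three variables, so $I=fI'$ with $f=\gcd(I)$ and either $I'=S$ or $\height I'\ge2$. Since $I$ is saturated, $I'$ is saturated as well, so $\dim S/I'\le1$ and $S/I'$ is Cohen--Macaulay (or zero).

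Now use the exact sequence $0\to S/I'(-\deg f)\to S/I\to S/(f)\to0$. Here $S/(f)$ is a hypersurface of dimension $2$, which is Cohen--Macaulay with $a_2=\deg f-3$. The long exact sequence gives $H^1_{\mathfrak m}(S/I)\cong H^1_{\mathfrak m}(S/I')(-\deg f)$, so $a_1(S/I)=a_1(S/I')+\deg f$. It also gives $a_2(S/I)=\deg f-3$, because the map $H^2_{\mathfrak m}(S/I)\to H^2_{\mathfrak m}(S/(f))$ is onto and $H^2_{\mathfrak m}(S/I')=0$.

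It therefore remains to show $a_1(S/I')\le -2$, i.e.\ $\reg(S/I')\le \deg f-1$ after shifting. This need not hold in general, so this is the point where I expect the real difficulty. As an alternative I would bypass it by using $\reg(I)=\max(\reg(f)+\reg(I'))$, i.e.\ $\reg(fI')=\deg f+\reg(I')$, and then apply the criterion to $I'$ after factoring.

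Integral closure respects this factorization. In a UFD, $\overline{fJ'}=f\,\overline{J'}$, and $J=gJ''$ with $g=\gcd(J)$ and $\overline J=g\overline{J''}$. Since multiplication by $g$ shifts regularity by $\deg g$ on both sides, it suffices to prove the inequality when $\height J\ge2$. Then $d\le1$, and the earlier cases cover this. So I would open the proof with this gcd reduction, and afterwards only the $d\le1$ and $I=S$ cases remain.
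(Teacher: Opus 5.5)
Your final argument is correct and is essentially the paper's proof. You first strip the gcd (via $\overline{gJ''}=g\,\overline{J''}$ and the degree shift, as in Lemma~\ref{lem:gcd-reduction}), then settle the $\mathfrak m$-primary case by $\topdeg(S/\overline J)\le\topdeg(S/J)$, and in the remaining height-two case observe that $I=(\overline J)^{\sat}$ is proper, saturated and one-dimensional, hence Cohen--Macaulay, so Theorem~\ref{thm:criterion-main} applies. You should simply delete the $d=2$ detour: it is a genuine dead end, because $\reg(S/I')\ge 0$ forces $a_1(S/I')\ge -1$, so the hypothesis $\reg(S/I)=a_2(S/I)+2$ fails whenever $I'\neq S$---which is exactly why the gcd reduction has to come first.
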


The proof of Theorem~\ref{thm:criterion-main} has two parts. First, a local-cohomology argument gives control of $\reg((\overline J)^{\sat})$. The remaining issue is the difference between $\overline J$ and its saturation. For this, a valuative dehomogenization argument, valid in any number of variables, shows that
\[
\bigl((\overline J)^{\sat}\bigr)_t=(\overline J)_t
\qquad\text{for all }t\geq\delta(J),
\]
where $\delta(J)$ denotes the largest degree of a minimal homogeneous generator of $J$.
In particular, the saturation defect vanishes in degrees at least
$\delta(J)\leq\reg(J)$.
The saturation formula then gives the result. In three variables, after
removing a common factor and treating the $\mathfrak m$-primary case, one is
left with a height-two ideal. The saturated ideal
$(\overline J)^{\sat}$ has a one-dimensional Cohen--Macaulay quotient, so
Corollary~\ref{cor:cm-criterion-main} applies.

The three-variable result is sharp with respect to the number of variables.
Indeed, the monomial ideal
\[
I=(xz^2,\ yz^2,\ y^2z,\ y^2w)\subseteq K[x,y,z,w]
\]
satisfies
\[
\overline I=I+(xyzw),\qquad
\reg(I)=3,\qquad
\reg(\overline I)=4.
\]
Both $I$ and $\overline I$ are saturated. Thus some hypothesis in
Theorem~\ref{thm:criterion-main} is necessary in higher dimension, even when
there is no saturation defect on either side.

\section{Preliminaries}\label{sec:preliminaries}
Unless otherwise specified, $K$ is a field and $S=K[x_1,\ldots,x_n]$, with $\mathfrak m=(x_1,\ldots,x_n)$. For a graded $S$-module $M$, set
\[
\operatorname{end}(M)=\sup\{d\in\mathbb Z:M_d\neq0\},
\]
with $\operatorname{end}(0)=-\infty$. For a finitely generated graded $S$-module $M$, let $\reg(M)$ denote its Castelnuovo--Mumford regularity and set $a_i(M)=\operatorname{end}H^i_{\mathfrak m}(M)$, so that
\[
\reg(M)=\max_i\{a_i(M)+i\}.
\]
For a nonzero proper homogeneous ideal $I$, we use $\reg(I)=\reg(S/I)+1$. We write
\[
I^{\sat}=I:\mathfrak m^\infty=\bigcup_{q\geq1}(I:\mathfrak m^q)
\]
for the saturation of a homogeneous ideal $I$.

\begin{lemma}\label{lem:saturation-formula}
Let $I\subseteq S$ be a nonzero proper homogeneous ideal such that
$I^{\sat}\neq S$. Then
$
        \reg(I)=
        \max\left\{
        \reg(I^{\sat}),
        \topdeg(I^{\sat}/I)+1
        \right\}.
$
\end{lemma}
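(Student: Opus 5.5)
The plan is to compare the local cohomology of $S/I$ and $S/I^{\sat}$ through the short exact sequence
\[
0\to I^{\sat}/I\to S/I\to S/I^{\sat}\to 0 .
\]
Write $N=I^{\sat}/I$. This module has finite length, because $N$ is annihilated by a power of $\mathfrak m$. Two facts about local cohomology follow at once:
\[
H^0_{\mathfrak m}(N)=N,\qquad H^i_{\mathfrak m}(N)=0 \text{ for } i\geq 1 .
\]
Moreover $I^{\sat}\neq S$ and $I^{\sat}$ is saturated, so $H^0_{\mathfrak m}(S/I^{\sat})=(I^{\sat}:\mathfrak m^\infty)/I^{\sat}=0$.

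Applying the long exact sequence of local cohomology then gives the following.

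First, in degree zero, $H^0_{\mathfrak m}(S/I)\cong N$. The connecting map lands in $H^0_{\mathfrak m}(S/I^{\sat})$, which is zero, and $H^0_{\mathfrak m}(S/I)=I^{\sat}/I$ directly in any case.

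Second, for $i\geq 1$ we get isomorphisms
\[
H^i_{\mathfrak m}(S/I)\cong H^i_{\mathfrak m}(S/I^{\sat}),
\]
because the terms $H^i_{\mathfrak m}(N)$ and $H^{i+1}_{\mathfrak m}(N)$ on either side vanish.

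Hence $a_0(S/I)=\topdeg(I^{\sat}/I)$ and $a_i(S/I)=a_i(S/I^{\sat})$ for all $i\geq1$. Since $a_0(S/I^{\sat})=-\infty$, we obtain
\[
\reg(S/I)=\max\{\topdeg(I^{\sat}/I),\ \reg(S/I^{\sat})\}.
\]

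To finish, shift by one using $\reg(I)=\reg(S/I)+1$ and $\reg(I^{\sat})=\reg(S/I^{\sat})+1$. The second identity is legitimate because $I^{\sat}$ is a nonzero proper ideal: it contains $I\neq0$ and is not all of $S$ by hypothesis. This yields exactly the stated formula.

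The only points needing care are bookkeeping conventions, not a real obstacle. If $I$ is already saturated, then $N=0$ with $\topdeg=-\infty$, and the formula reads $\reg(I)=\reg(I^{\sat})$, which is consistent. In the edge case $\dim S/I^{\sat}=0$ we cannot have $I^{\sat}\neq S$, since a saturated ideal of dimension zero would be the unit ideal. So $S/I^{\sat}$ has positive depth, and its regularity is computed only from $i\geq1$.
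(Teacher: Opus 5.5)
Your proof is correct and follows the paper's argument: you use the sequence $0\to I^{\sat}/I\to S/I\to S/I^{\sat}\to 0$, the finite length of $I^{\sat}/I$, the vanishing of $H^0_{\mathfrak m}(S/I^{\sat})$, and the resulting isomorphisms $H^i_{\mathfrak m}(S/I)\cong H^i_{\mathfrak m}(S/I^{\sat})$ for $i\geq1$, and then shift by $\reg(I)=\reg(S/I)+1$. One wording slip: the map $H^0_{\mathfrak m}(S/I)\to H^0_{\mathfrak m}(S/I^{\sat})$ is the induced map, not a connecting map, but this does not affect the argument.
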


\begin{proof}
Set $T=I^{\sat}/I$. Since $T$ has finite length, $H^0_{\mathfrak m}(T)=T$ and
$H^i_{\mathfrak m}(T)=0$ for all $i\geq1$. From the exact sequence
$0\to T\to S/I\to S/I^{\sat}\to0$, we have
$T=H^0_{\mathfrak m}(S/I)$, while
$H^0_{\mathfrak m}(S/I^{\sat})=0$. Hence the long exact sequence in local
cohomology gives
$H^i_{\mathfrak m}(S/I)\cong H^i_{\mathfrak m}(S/I^{\sat})$ for all
$i\geq1$. Therefore
\[
\reg(S/I)=
\max\{\topdeg(T),\reg(S/I^{\sat})\}.
\]

Using $\reg(I)=\reg(S/I)+1$, the formula follows.

\end{proof}

\begin{lemma}\label{lem:newton-criterion}
Let $L\subseteq K[x_1,\ldots,x_n]$ be a monomial ideal, and let $G(L)$
denote its minimal monomial generating set. For $\alpha\in\mathbb N^n$,
one has
\[
 x^\alpha\in\overline L
 \quad\Longleftrightarrow\quad
 \alpha\in\conv\{\beta\in\mathbb N^n:x^\beta\in G(L)\}
              +\mathbb R_{\geq0}^n.
\]
\end{lemma}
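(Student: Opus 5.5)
The plan is to prove both implications directly, reducing integrality of a monomial over $L$ to a convexity statement about exponents. Write $P=\conv\{\beta:x^\beta\in G(L)\}+\mathbb R_{\geq0}^n$; this is the Newton polyhedron of $L$, and it equals $\conv\{\beta: x^\beta\in L\}$, since every monomial of $L$ is a multiple of a generator.

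For $(\Leftarrow)$, we first show that the rational points of $P$ are rational convex combinations of generator exponents plus a nonnegative vector. If $\alpha\in P$ with $\alpha\in\mathbb N^n$, we write $\alpha=\sum_j\lambda_j\beta_j+\gamma$ with $\beta_j$ exponents of generators, $\lambda_j\geq0$, $\sum\lambda_j=1$ and $\gamma\geq0$. Because $P$ is a rational polyhedron and $\alpha$ is rational, Carathéodory's theorem plus a rational-perturbation (or Fourier--Motzkin) argument let us choose $\lambda_j\in\mathbb Q$, and then $\gamma\in\mathbb Q^n_{\geq0}$ as well. Take a common denominator $k$, so that $k\lambda_j=m_j\in\mathbb N$ and $k\gamma\in\mathbb N^n$. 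Then
\[
(x^\alpha)^k=\prod_j (x^{\beta_j})^{m_j}\cdot x^{k\gamma}\in L^k .
\]
Hence $x^\alpha$ satisfies $z^k-(x^\alpha)^k=0$, which is an equation of integral dependence over $L$. So $x^\alpha\in\overline L$.

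For $(\Rightarrow)$, suppose $f=x^\alpha$ satisfies $f^k+a_1f^{k-1}+\cdots+a_k=0$ with $a_i\in L^i$. Since $L^i$ is a monomial ideal, we may compare the coefficient of the monomial $x^{k\alpha}$ on both sides. The term $f^k$ contributes $x^{k\alpha}$, which must cancel, so some $a_if^{k-i}$ contains $x^{k\alpha}$ in its support. Every monomial in the support of $a_i$ lies in $L^i$, so $x^{k\alpha}=x^{(k-i)\alpha}u$ with $u\in L^i$ a monomial. It follows that $x^{i\alpha}\in L^i$, so $x^{i\alpha}$ is divisible by a product of $i$ generators. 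This gives $i\alpha\in\sum_{t=1}^i\beta_{j_t}+\mathbb N^n$, and dividing by $i$ yields $\alpha\in P$.

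The only subtle point is the rationality of the coefficients in the first direction. We handle it by observing that the set of feasible $(\lambda,\gamma)$ is a nonempty polyhedron defined by linear equalities and inequalities with rational data. Such a polyhedron has a vertex, and vertices of rational polyhedra are rational. An alternative is to invoke the standard fact that the integral closure of a monomial ideal is monomial; but the argument above avoids that and works over any field $K$.
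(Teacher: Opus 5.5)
Your proof is correct. It is essentially the standard argument behind the Newton polyhedron criterion, which the paper does not prove but simply cites from Huneke--Swanson, Chapter~1: comparing the coefficient of $x^{k\alpha}$ in an equation of integral dependence gives $x^{i\alpha}\in L^i$ for some $i\geq 1$, and conversely a rational convex representation of $\alpha$ gives $(x^\alpha)^k\in L^k$. One point is worth stating explicitly: the feasible set of $(\lambda,\gamma)$ has a vertex because it lies in a nonnegative orthant and so contains no line, which is what makes the rational-vertex step valid.
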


\begin{proof}
This is the Newton polyhedron criterion for integral closures of
monomial ideals; see \cite[Chapter~1]{HunekeSwanson}.
\end{proof}

\begin{lemma}\label{lem:m-primary-reg}
Let $I\subseteq S$ be a homogeneous $\mathfrak{m}$-primary ideal. Then
$
    \reg(I)=\min\{t:\mathfrak{m}^t\subseteq I\}.
$
Consequently, if $I\subseteq J$ are homogeneous $\mathfrak{m}$-primary ideals, then
$\reg(J)\leq \reg(I)$.
\end{lemma}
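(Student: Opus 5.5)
Since $I$ is $\mathfrak m$-primary, $S/I$ is a graded module of finite length. The plan is to compute its regularity directly from local cohomology and then translate that into the stated description in terms of powers of $\mathfrak m$.

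First, because $S/I$ has finite length, it is $\mathfrak m$-torsion. Hence $H^0_{\mathfrak m}(S/I)=S/I$ and $H^i_{\mathfrak m}(S/I)=0$ for $i\geq1$, which gives
\[
\reg(S/I)=\topdeg(S/I)=\max\{d:(S/I)_d\neq0\}.
\]
We may assume $I\neq S$, since otherwise both sides degenerate.

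Next, set $e=\topdeg(S/I)$ and show that $\min\{t:\mathfrak m^t\subseteq I\}=e+1$. The key observation is that $\mathfrak m^t\subseteq I$ holds exactly when $S_{t'}\subseteq I$ for every $t'\geq t$, because $\mathfrak m^t=\bigoplus_{t'\geq t}S_{t'}$. That in turn says $(S/I)_{t'}=0$ for all $t'\geq t$, which is equivalent to $t>e$. So the least such $t$ is $e+1$, and
\[
\reg(I)=\reg(S/I)+1=e+1=\min\{t:\mathfrak m^t\subseteq I\}.
\]

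For the consequence, suppose $I\subseteq J$ are both $\mathfrak m$-primary. Then $\mathfrak m^t\subseteq I$ implies $\mathfrak m^t\subseteq J$, so the minimum defining $\reg(J)$ is taken over a set that contains the one defining $\reg(I)$. Therefore $\reg(J)\leq\reg(I)$.

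There is no real obstacle here. The only point needing care is the identification of $\mathfrak m^t$ with the truncation $S_{\geq t}$, which holds because the ring is standard graded.
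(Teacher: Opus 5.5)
Your proof is correct and follows essentially the same route as the paper. Since only $H^0_{\mathfrak m}(S/I)=S/I$ survives, $\reg(S/I)=\topdeg(S/I)$; the identification $\mathfrak m^t=S_{\geq t}$ converts this into the least $t$ with $\mathfrak m^t\subseteq I$; and monotonicity follows from $\mathfrak m^{\reg(I)}\subseteq I\subseteq J$.
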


\begin{proof}
Since $S/I$ is Artinian, its only nonzero local cohomology is
$H^0_{\mathfrak m}(S/I)=S/I$. Hence
$
        \reg(S/I)=\topdeg(S/I).
$
Thus $\reg(S/I)+1$ is the least integer $t$ for which
$\mathfrak m^t\subseteq I$. From $\reg(I)=\reg(S/I)+1$, the formula follows.
The final assertion is immediate from
$\mathfrak m^{\reg(I)}\subseteq I\subseteq J$.
\end{proof}

\section{General reductions and high-degree agreement}\label{sec:general-reductions}

Let $S=K[x_1,\ldots,x_n]$, and write
$\mathfrak m=(x_1,\ldots,x_n)$.
\begin{lemma}\label{lem:gcd-reduction}
Let $0\neq I\subseteq S$ be homogeneous. Let $h$ be the homogeneous gcd of
a minimal homogeneous generating set of $I$, and write $I=hJ$. Then 
\[
        \overline I=h\overline J,\qquad
        \reg(I)=\deg(h)+\reg(J),\qquad
        \reg(\overline I)=\deg(h)+\reg(\overline J).
\]
\end{lemma}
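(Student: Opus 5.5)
The plan is to treat the three claims in turn. The regularity statements come from the isomorphism of graded modules $J(-\deg h)\cong hJ=I$, given by multiplication by $h$, which is injective because $S$ is a domain. The integral-closure statement comes from the fact that integral closure commutes with multiplication by a nonzerodivisor in a normal domain. Write $e=\deg(h)$.

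\textbf{Regularity.} Since $h$ is a nonzerodivisor, multiplication by $h$ gives a graded isomorphism $J(-e)\xrightarrow{\ \sim\ } I$. Regularity of a module satisfies $\reg(M(-e))=\reg(M)+e$, so $\reg(I)=\reg(J)+e$. This works directly at the level of ideals as modules, so I will use $\reg$ of the ideal viewed as a module, which agrees with the convention $\reg(I)=\reg(S/I)+1$. There is one edge case: $J$ might equal $S$, namely when $I$ is principal. Then $\reg(J)=0$ as a module, and $\reg(I)=e$, which is consistent. I will note this case briefly.

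\textbf{Integral closure.} Next I show $\overline{hJ}=h\overline J$. The inclusion $h\overline J\subseteq\overline{hJ}$ is immediate: multiplying an integral equation $f^k+a_1f^{k-1}+\dots+a_k=0$ with $a_i\in J^i$ by $h^k$ gives one for $hf$ over $hJ$. For the reverse inclusion, take $g\in\overline{hJ}$. Then $g\in\overline{(h)}=(h)$, because principal ideals in the normal domain $S$ are integrally closed. Write $g=hf$. An equation $(hf)^k+\sum b_i(hf)^{k-i}=0$ with $b_i\in h^iJ^i$ becomes, after dividing by $h^k$, an equation $f^k+\sum (b_i/h^i)f^{k-i}=0$ with $b_i/h^i\in J^i$. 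So $f\in\overline J$. As an alternative route I could use the valuative criterion, since $v(hf)\ge v(hJ)$ gives $v(f)\ge v(J)$.

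\textbf{Regularity of the closure.} The identity $\reg(\overline I)=e+\reg(\overline J)$ then follows by the same shift isomorphism, $\overline J(-e)\cong h\overline J=\overline I$.

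\textbf{Main obstacle.} The only point needing care is the division step. There I need $b_i/h^i\in J^i$, which follows from $h^iJ^i$ being isomorphic to $J^i$ via $h^i$, together with $h$ being a nonzerodivisor. The gcd hypothesis is not actually needed for any of these identities, which hold for any homogeneous $h$ with $I=hJ$.
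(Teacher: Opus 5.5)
Your proof is correct and follows the paper's route: you get the regularity identities from the graded isomorphisms $J(-\deg h)\cong hJ$ and $\overline J(-\deg h)\cong h\overline J$, and the identity $\overline{hJ}=h\overline J$ from normality of $S$. The only difference is that the paper cites \cite[Proposition~1.5.2]{HunekeSwanson} for the latter, whereas you reprove it directly (via $\overline{(h)}=(h)$ and cancelling $h^k$ in the integral equation), which is the standard argument behind that citation.
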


\begin{proof}
Since $S$ is a normal domain and $h$ is a nonzerodivisor,
\cite[Proposition~1.5.2]{HunekeSwanson} gives $
\overline{hJ}=h\overline J$. The two regularity equalities follow from the graded isomorphisms
$J(-\deg h)\cong hJ$ and
$\overline J(-\deg h)\cong h\overline J$.
\end{proof}

\begin{lemma}\label{lem:top-cohomology-monotonicity}
Let $J\subseteq I$ be nonzero proper homogeneous ideals of $S$, and set $d=\dim(S/I)$. Suppose that
$\dim(I/J)\leq d$ and
\[
        \reg(S/I)=a_d(S/I)+d.
\]
Then
\[
        \reg(J)\geq\reg(I).
\]
\end{lemma}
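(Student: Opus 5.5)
The plan is to compare the top local cohomology of $S/J$ and $S/I$ through the short exact sequence
\[
0\to I/J\to S/J\to S/I\to 0 .
\]
Since $\reg(J)=\reg(S/J)+1$ and $\reg(I)=\reg(S/I)+1$, it suffices to show $\reg(S/J)\geq \reg(S/I)=a_d(S/I)+d$. Because $\reg(S/J)\geq a_d(S/J)+d$, the whole statement reduces to proving $a_d(S/J)\geq a_d(S/I)$. The natural way to get this is to show that the induced map $H^d_{\mathfrak m}(S/J)\to H^d_{\mathfrak m}(S/I)$ is surjective.

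The long exact sequence in local cohomology contains the segment
\[
H^d_{\mathfrak m}(S/J)\to H^d_{\mathfrak m}(S/I)\to H^{d+1}_{\mathfrak m}(I/J).
\]
The hypothesis $\dim(I/J)\leq d$ and Grothendieck vanishing give $H^{d+1}_{\mathfrak m}(I/J)=0$, so the first map is surjective. Then, in every degree $t$ where $H^d_{\mathfrak m}(S/I)_t\neq 0$, we also have $H^d_{\mathfrak m}(S/J)_t\neq0$. In particular $a_d(S/J)\geq a_d(S/I)$, and $a_d(S/I)$ is finite since $d=\dim S/I$ and $H^d_{\mathfrak m}(S/I)\neq 0$.

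Combining these gives
\[
\reg(S/J)\geq a_d(S/J)+d\geq a_d(S/I)+d=\reg(S/I),
\]
and adding $1$ to both sides yields $\reg(J)\geq\reg(I)$.

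There is essentially no hard step. The only points needing care are bookkeeping: when $I/J=0$ the statement is trivial, and Grothendieck vanishing must be applied to the module $I/J$ of dimension at most $d$, not to $S/J$, which may have larger dimension. The hypothesis on $\reg(S/I)$ is used exactly once, to replace $\reg(S/I)$ by its top local cohomology contribution.
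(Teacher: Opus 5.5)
Your proposal is correct and follows the paper's proof essentially step for step. Both arguments use the exact sequence $0\to I/J\to S/J\to S/I\to 0$, get the surjection $H^d_{\mathfrak m}(S/J)\twoheadrightarrow H^d_{\mathfrak m}(S/I)$ from the Grothendieck vanishing $H^{d+1}_{\mathfrak m}(I/J)=0$, and conclude with the chain $\reg(S/J)\geq a_d(S/J)+d\geq a_d(S/I)+d=\reg(S/I)$.
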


\begin{proof}
From
\[
0\longrightarrow I/J\longrightarrow S/J\longrightarrow S/I
\longrightarrow0
\]
and $\dim(I/J)\leq d$, Grothendieck vanishing gives
$H^{d+1}_{\mathfrak m}(I/J)=0$. Hence there is a surjection
\[
H^d_{\mathfrak m}(S/J)\twoheadrightarrow H^d_{\mathfrak m}(S/I),
\]
so $a_d(S/J)\geq a_d(S/I)$. Moreover,
$\reg(S/J)\geq a_d(S/J)+d$. Therefore
\[
\reg(S/J)\geq a_d(S/J)+d
\geq a_d(S/I)+d
=\reg(S/I).
\]
Using $\reg(J)=\reg(S/J)+1$ and
$\reg(I)=\reg(S/I)+1$ gives the result.
\end{proof}

\begin{corollary}\label{cor:cm-monotonicity}
Let $J\subseteq I$ be nonzero proper homogeneous ideals. Suppose that
$S/I$ is Cohen--Macaulay of dimension $d$ and $\dim(I/J)\leq d$. Then
\[
        \reg(J)\geq\reg(I).
\]
\end{corollary}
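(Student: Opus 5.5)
This is a direct specialization of Lemma~\ref{lem:top-cohomology-monotonicity}. The plan is to check its two hypotheses for the pair $J\subseteq I$ with $S/I$ Cohen--Macaulay of dimension $d$, and then quote its conclusion.

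The key point is that Cohen--Macaulayness forces all local cohomology of $S/I$ to sit in the single index $d$. By Grothendieck's vanishing and nonvanishing theorems, $H^d_{\mathfrak m}(S/I)\neq 0$, so $a_d(S/I)$ is finite. Because $S/I$ is Cohen--Macaulay, $\depth(S/I)=d$, and hence $H^i_{\mathfrak m}(S/I)=0$ for every $i\neq d$. For those indices $a_i(S/I)=-\infty$. The formula $\reg(S/I)=\max_i\{a_i(S/I)+i\}$ then collapses to
\[
\reg(S/I)=a_d(S/I)+d,
\]
which is exactly the regularity hypothesis of Lemma~\ref{lem:top-cohomology-monotonicity}.

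The remaining hypotheses are assumed directly in the statement: $J\subseteq I$ are nonzero proper homogeneous ideals, $d=\dim(S/I)$, and $\dim(I/J)\leq d$. Applying Lemma~\ref{lem:top-cohomology-monotonicity} therefore gives $\reg(J)\geq\reg(I)$.

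No step is a real obstacle. The only point needing care is the edge case $d=0$. There $S/I$ is Artinian, hence Cohen--Macaulay, and the same argument applies verbatim with $H^0_{\mathfrak m}(S/I)=S/I\neq 0$, since $I$ is proper.
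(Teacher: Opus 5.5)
Your proposal is correct and follows the paper's proof: you specialize Lemma~\ref{lem:top-cohomology-monotonicity} after observing that Cohen--Macaulayness concentrates local cohomology in index $d$, so $\reg(S/I)=a_d(S/I)+d$. You also spell out the vanishing, nonvanishing and finiteness details that the paper leaves implicit.
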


\begin{proof}
For a Cohen--Macaulay module $M$ of dimension $d$, regularity is
$a_d(M)+d$. Apply Lemma~\ref{lem:top-cohomology-monotonicity}.
\end{proof}

\begin{proposition}\label{prop:principal-mprimary}
Let $0\neq I\subseteq S$ be a homogeneous ideal.
Suppose that, after removing the homogeneous gcd of its minimal generators, the
remaining ideal is either $S$ or $\mathfrak{m}$-primary. Then
$
    \reg(\overline{I})\leq \reg(I).
$
\end{proposition}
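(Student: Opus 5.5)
Write $I=hJ$ with $h$ the homogeneous gcd of a minimal generating set, as in Lemma~\ref{lem:gcd-reduction}. That lemma gives $\overline I=h\overline J$, $\reg(I)=\deg(h)+\reg(J)$ and $\reg(\overline I)=\deg(h)+\reg(\overline J)$. It therefore suffices to prove $\reg(\overline J)\leq\reg(J)$, and we split into the two cases of the hypothesis.

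If $J=S$, then $I=(h)$ is principal. Since $S$ is a normal domain, $\overline J=\overline S=S$, so $\overline I=I$ and the inequality holds with equality. One can also see this directly: $\overline{(h)}=(h)$ because principal ideals in a normal domain are integrally closed, and then $\reg(\overline I)=\reg(I)=\deg h$.

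If $J$ is $\mathfrak m$-primary, then $J\subseteq\overline J$. Moreover $\overline J$ is again $\mathfrak m$-primary, since it has the same radical as $J$. It is also homogeneous, since the integral closure of a homogeneous ideal is homogeneous. It is proper, since $\overline J\subseteq\sqrt J=\mathfrak m$. Lemma~\ref{lem:m-primary-reg} then gives $\reg(\overline J)\leq\reg(J)$, because $\mathfrak m^{\reg(J)}\subseteq J\subseteq\overline J$. Adding $\deg(h)$ to both sides yields $\reg(\overline I)\leq\reg(I)$.

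Nothing here is a real obstacle. The only points to check are standard facts: the integral closure of a homogeneous ideal is homogeneous (Huneke--Swanson), and $\overline J$ stays proper and $\mathfrak m$-primary. This makes Lemma~\ref{lem:m-primary-reg} applicable to both $J$ and $\overline J$.
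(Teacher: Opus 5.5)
Your proposal is correct and follows essentially the same route as the paper: reduce via Lemma~\ref{lem:gcd-reduction}, note that principal ideals are integrally closed, and in the $\mathfrak m$-primary case apply Lemma~\ref{lem:m-primary-reg} to $J\subseteq\overline J$. Your extra checks, that $\overline J$ is homogeneous, proper, and $\mathfrak m$-primary, are exactly the facts the paper uses implicitly.
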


\begin{proof}
Write $I=hJ$ as in Lemma~\ref{lem:gcd-reduction}. If $J=S$, then $I$
is principal, hence integrally closed. If $J$ is $\mathfrak m$-primary,
then $\overline J$ is also $\mathfrak m$-primary, so
Lemma~\ref{lem:m-primary-reg} gives
$
\reg(\overline{J})\leq \reg(J).
$
The result now follows from Lemma~\ref{lem:gcd-reduction}.
\end{proof}

\begin{lemma}\label{lem:valuative-criterion}
Let $R$ be a Noetherian domain with fraction field $K$, and let
$0\neq Q=(a_1,\ldots,a_s)\subseteq R$. For a rank-one discrete valuation
$v$ of $K$ that is nonnegative on $R$, set
$v(Q)=\min_i v(a_i)$. Then, for $f\in R$,
\[
f\in\overline Q
\quad\Longleftrightarrow\quad
v(f)\geq v(Q)
\]
for every such valuation $v$. Moreover, the forward implication holds for
every valuation of $K$ that is nonnegative on $R$, even if the valuation is
trivial or is not discrete.
\end{lemma}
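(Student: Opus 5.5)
This is the standard valuative criterion for integral closure; see \cite[Chapter~6]{HunekeSwanson}. I would prove the forward implication by a direct computation and the converse by reduction to the integral closure of a Noetherian domain.

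\textbf{Forward direction.} Let $f\in\overline Q$, so there is an equation of integral dependence
\[
f^m+c_1f^{m-1}+\cdots+c_m=0,\qquad c_i\in Q^i .
\]
Let $v$ be any valuation of $K$ with $v\geq 0$ on $R$. Each $c_i$ is an $R$-combination of products of $i$ generators $a_j$, so $v(c_i)\geq i\,v(Q)$. Suppose $v(f)<v(Q)$. Then for every $i\geq1$,
\[
v(c_if^{m-i})\geq i\,v(Q)+(m-i)v(f)>m\,v(f)=v(f^m).
\]
So $v(f^m)$ is strictly smaller than the valuation of every other term, and the strict triangle inequality makes the sum nonzero, a contradiction. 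This argument never uses discreteness or nontriviality of $v$, which proves the ``moreover'' clause. (If $f=0$ the claim is trivial.)

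\textbf{Converse.} Suppose $v(f)\geq v(Q)$ for all rank-one discrete valuations of $K$ nonnegative on $R$. I would pass to the Rees algebra $A=R[Qt]\subseteq R[t]$, which is a Noetherian domain. By the standard fact, $f\in\overline Q$ if and only if $ft$ is integral over $A$, that is, $ft\in\overline A$.

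By Mori--Nagata, $\overline A$ is a Krull domain, so it is the intersection of the DVRs $\overline A_P$ over height-one primes $P$ of $\overline A$. It therefore suffices to show $w(ft)\geq0$ for every such valuation $w$ of $K(t)$. Fix one and let $v=w|_K$.

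\emph{Case $w(Qt)>0$.} Then $w(t)=-v(a_j)+w(a_jt)>-v(Q)$ for the minimizing $j$. Hence $w(ft)=v(f)+w(t)>v(f)-v(Q)$, and it suffices to know $v(f)\geq v(Q)$. The restriction $v$ is a discrete valuation of $K$, nonnegative on $R$, possibly trivial.
\begin{itemize}
\item If $v$ is nontrivial, it is rank one discrete, and the hypothesis applies.
\item If $v$ is trivial, then $w(Qt)>0$ forces $w(t)>0$, so $w(ft)\geq0$ directly.
\end{itemize}

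\emph{Case $w(Qt)=0$.} Then $w(a_jt)=0$ for some $j$, so $w(t)=-v(a_j)\geq -v(Q)$ when $v(a_j)=v(Q)$. In general $w(t)\geq -v(a_j)$ for all $j$ with equality for some $j$, hence $w(t)=-v(Q)$. Then $w(ft)=v(f)-v(Q)\geq0$ if $v$ is nontrivial. If $v$ is trivial, then $w(t)=0$, and $w(ft)=0$ unless $f=0$.

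\textbf{Expected obstacle.} The main technical obstacle is the Mori--Nagata step and checking that the restrictions to $K$ are rank-one discrete or trivial. Restrictions of DVRs of $K(t)$ to $K$ are discrete of rank at most one, so this holds, but it must be verified.
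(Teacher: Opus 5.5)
Your proof is correct, but it argues from first principles where the paper only cites. The paper takes the valuative criterion in the form $f\in\overline Q$ iff $f\in QV$ for every rank-one DVR $R\subseteq V\subseteq K$ directly from \cite[Theorem~6.8.3]{HunekeSwanson}. It converts $f\in QV$ into $v(f)\geq v(Q)$ by noting that $QV$ is principal, generated by an $a_i$ of minimal value. It then gets the ``moreover'' clause by citing \cite[Proposition~6.8.10]{HunekeSwanson}.

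You instead prove the forward direction straight from an equation of integral dependence. This makes it transparent that neither discreteness nor nontriviality of $v$ matters, which is exactly what the paper later relies on in Lemma~\ref{lem:high-degree-agreement-general}, where the restricted valuation may be trivial. You prove the converse by passing to $\overline{R[Qt]}$ and using Mori--Nagata, which is essentially a reproof of the cited theorem. The paper's route is shorter. Yours is self-contained, apart from the Rees-algebra description of $\overline Q$ and the Krull property of $\overline{R[Qt]}$.

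One simplification: your case split on $w(Qt)$ and on triviality of $v$ is unnecessary. Since $w(a_jt)\geq 0$ for all $j$, you already have $w(t)\geq -v(Q)$, hence $w(ft)\geq v(f)-v(Q)$. When $v=w|_K$ is trivial, $v(f)\geq 0=v(Q)$ holds automatically because $Q\neq 0$.
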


\begin{proof}
By the valuative criterion for integral closure
\cite[Theorem~6.8.3]{HunekeSwanson}, $f\in\overline Q$ if and only if
$f\in QV$ for every rank-one discrete valuation ring $V$ with
$R\subseteq V\subseteq K$. If $v$ is the valuation corresponding to $V$,
then $R\subseteq V$ is equivalent to $v$ being nonnegative on $R$.
Moreover, since finitely generated ideals in a valuation ring are principal,
$QV=a_iV$ for some $i$ with $v(a_i)=\min_j v(a_j)=v(Q)$. Hence
$f\in QV$ if and only if $v(f)\geq v(Q)$, proving the claim.
The final assertion follows from
\cite[Proposition~6.8.10]{HunekeSwanson}.
\end{proof}

\begin{lemma}\label{lem:high-degree-agreement-general}
Let $J\subseteq S$ be a proper nonzero homogeneous ideal, and let
$g_1,\ldots,g_s$ be its minimal homogeneous generators. Set
$\delta(J)=\max_i\{\deg g_i\}$, $L=\overline J$, and $I=L^{\sat}$.
Then $I_d=L_d$ for all $d\geq\delta(J)$. Equivalently,
\[
\operatorname{end}(I/L)+1\leq\delta(J).
\]
Moreover, $\delta(J)\leq\reg(J)$.
\end{lemma}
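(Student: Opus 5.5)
Fix $d\geq\delta(J)$ and a homogeneous $f\in I_d$; the goal is $f\in L=\overline J$. The plan is to apply the valuative criterion, Lemma~\ref{lem:valuative-criterion}, with $R=S$ and $Q=J=(g_1,\ldots,g_s)$. So it suffices to show $v(f)\geq v(J)$ for every rank-one discrete valuation $v$ of $\operatorname{Frac}(S)$ that is nonnegative on $S$. Since $f\in I=L:\mathfrak m^\infty$, there is $q\geq 1$ with $f\mathfrak m^q\subseteq L$. In particular $fx_i^q\in\overline J$ for every $i$, so the forward direction of Lemma~\ref{lem:valuative-criterion} gives
\[
v(f)+q\,v(x_i)\geq v(J)\qquad\text{for all } i.
\]
If some $v(x_i)=0$, we are done at once. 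The remaining case is $v(x_i)>0$ for all $i$, i.e.\ the center of $v$ on $S$ contains $\mathfrak m$. This is where the degree hypothesis $d\geq\delta(J)$ must enter.

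To handle this case I will dehomogenize. Choose an index $i$ with $v(x_i)=\min_j v(x_j)=:c>0$. For any homogeneous $h$ of degree $e$, write $h=x_i^{e}\,h^\ast$, where $h^\ast=h(x_1/x_i,\ldots,x_n/x_i)$ lies in $A=K[x_1/x_i,\ldots,x_n/x_i]$. Because $v(x_j/x_i)\geq0$ for all $j$, the valuation $v$ is nonnegative on $A$, and hence $v(h)=e\,c+v(h^\ast)$.

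The key step is to work in $A$ rather than $S$. The ideals $J^\ast=(g_1^\ast,\ldots,g_s^\ast)A$ and $\overline{J}^\ast A$ agree up to integral closure. Moreover, since $x_i$ is a unit in $S_{x_i}$ and $S_{x_i}=A[x_i^{\pm1}]$, saturation disappears after inverting $x_i$. From $fx_i^q\in\overline J$ I will deduce $f^\ast\in\overline{J^\ast}$ in $A$: the dehomogenization of $fx_i^q$ is $f^\ast$, and dehomogenization carries an integral dependence relation for $fx_i^q$ over $J$ to one for $f^\ast$ over $J^\ast$. Applying the forward implication of Lemma~\ref{lem:valuative-criterion} in $A$ (for the valuation $v$, which is nonnegative on $A$) then gives
\[
v(f^\ast)\geq\min_k v(g_k^\ast).
\]
Combining this with $v(f)=dc+v(f^\ast)$ and $v(g_k)=\deg(g_k)\,c+v(g_k^\ast)$, and using $d\geq\deg g_k$ and $c>0$, we get
\[
v(f)\geq \min_k\bigl(\deg(g_k)\,c+v(g_k^\ast)\bigr)=v(J).
\]
This proves $f\in\overline J$. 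The main obstacle is the dehomogenization step: verifying that the integral equation survives dehomogenization and that no degree bookkeeping is lost. I would handle it by dividing the equation $(fx_i^q)^m+\sum_j r_j (fx_i^q)^{m-j}=0$, where $r_j\in J^j$ is homogeneous of the right degree, by $x_i^{m(d+q)}$, and expanding $r_j$ as a sum of products of the $g_k$ with homogeneous coefficients.

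The equivalent form is then immediate: $I_t=L_t$ for all $t\geq\delta(J)$ says exactly that $(I/L)_t=0$ there, i.e.\ $\operatorname{end}(I/L)+1\leq\delta(J)$. Finally, $\delta(J)\leq\reg(J)$ is standard: the minimal generators of $J$ give the degree-$0$ homological part of its minimal free resolution, and $\reg(J)\geq\max_j\{t_j(J)-j\}\geq t_0(J)=\delta(J)$.
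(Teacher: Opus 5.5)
Your proposal is correct and follows essentially the paper's argument: dehomogenize at a variable $x_i$ of minimal $v$-value, show $f^\ast\in\overline{J^\ast}$ in $A$, apply the forward valuative inequality there, and conclude using $d\geq\deg g_k$ together with $v(x_i)\geq 0$. The differences are cosmetic. You verify $f^\ast\in\overline{J^\ast}$ by dehomogenizing an explicit integral equation, whereas the paper uses localization and faithful flatness of $A\to A[x_i^{\pm1}]$. Your preliminary case $v(x_i)=0$ is unnecessary, since the main argument only needs $c\geq 0$.
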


\begin{proof}
Write $e_i=\deg g_i$, so $e_i\leq \delta(J)$ for every $i$. Let
$f\in I_d$ be homogeneous with $d\geq \delta(J)$. It suffices to show
that $f\in L$.

Fix a rank-one discrete valuation $v$ of $\operatorname{Frac}(S)$
that is nonnegative on $S$, and choose $j$ such that
$v(x_j)=\min_k v(x_k)$. Then
$v(x_i/x_j)=v(x_i)-v(x_j)\geq0$ for every $i$, so $v$ is nonnegative on
$A=(S_{x_j})_0=K[x_i/x_j:i\neq j]$.

For a homogeneous form $h$, write
$h^{(j)}=h/x_j^{\deg h}$, and set
\[
        J^{(j)}=(g_1^{(j)},\ldots,g_s^{(j)})\subseteq A.
\]
Set $B=S_{x_j}=A[x_j,x_j^{-1}]$. Since $f\in L^{\sat}$, there exists
$N\geq0$ such that $x_j^Nf\in L$. Hence $f^{(j)}\in LB$. Moreover,
$f^{(j)}$ has degree zero, so $f^{(j)}\in B_0=A$.

The ring $B$ is a Laurent polynomial extension of $A$, hence faithfully flat
over $A$, and $JB=J^{(j)}B$. Integral closure commutes with localization and
contracts under faithfully flat extensions
\cite[Propositions~1.1.4 and~1.6.2]{HunekeSwanson}. Therefore

\[
LB=\overline{JB}=\overline{J^{(j)}B},
\qquad
\overline{J^{(j)}B}\cap A=\overline{J^{(j)}}.
\]

Thus $f^{(j)}\in\overline{J^{(j)}}$.

The restriction of $v$ to $\operatorname{Frac}(A)$ may be trivial. However,
the forward implication in the final assertion of
Lemma~\ref{lem:valuative-criterion} applies and gives
\[
v(f^{(j)})\geq \min_i v(g_i^{(j)}).
\]
Equivalently,
\[
v(f)-d\,v(x_j)\geq
\min_i\{v(g_i)-e_i\,v(x_j)\}.
\]
Choose $i$ attaining the minimum. Then
\[
v(f)\geq v(g_i)+(d-e_i)v(x_j).
\]
Since $d\geq\delta(J)\geq e_i$ and $v(x_j)\geq0$, we get
$v(f)\geq v(g_i)\geq v(J)$. As this holds for every valuation in the
valuative criterion, $f\in\overline J=L$.

Thus $I_d\subseteq L_d$ for all $d\geq\delta(J)$. The reverse
inclusion follows from $L\subseteq L^{\sat}=I$, so $I_d=L_d$ for all
$d\geq\delta(J)$. Finally, the degrees of the minimal generators of $J$ are
bounded above by $\reg(J)$, and hence $\delta(J)\leq\reg(J)$.
\end{proof}

\begin{proof}[Proof of Theorem~\ref{thm:criterion-main}]
Set $r=\reg(J)$. Since integral closure preserves radicals,
$\dim(S/J)=\dim(S/L)$. Moreover, saturation does not change the localization
of $L$ at any prime different from $\mathfrak m$. Since $I=L^{\sat}\neq S$,
it follows that
\[
\dim(S/J)=\dim(S/L)=\dim(S/I)=d.
\]
Since $I/J\subseteq S/J$, we have $\dim(I/J)\leq d$.

 Hence
Lemma~\ref{lem:top-cohomology-monotonicity} gives
\[
        \reg(I)\leq\reg(J)=r.
\]
By Lemma~\ref{lem:high-degree-agreement-general},
$\topdeg(I/L)+1\leq\delta(J)\leq r$. Applying
Lemma~\ref{lem:saturation-formula} to $L$ yields
\[
        \reg(\overline J)=\reg(L)
        =\max\{\reg(I),\topdeg(I/L)+1\}\leq r.
\]
\end{proof}

\section{Reduction in three variables}\label{sec:reduction}

We now fix  $S=K[x,y,z]$, with
$\mathfrak m=(x,y,z)$.

\begin{proposition}\label{prop:reduction-height-two}
The inequality $\reg(\overline I)\leq\reg(I)$ holds for all homogeneous ideals $I\subseteq K[x,y,z]$ if and only if it holds for all homogeneous ideals $I\subseteq K[x,y,z]$ with $\operatorname{ht}(I)=2$.

\end{proposition}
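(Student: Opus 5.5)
The forward direction is trivial, so only the converse needs proof: assume the inequality holds for all height-two homogeneous ideals of $S=K[x,y,z]$, and let $I$ be an arbitrary homogeneous ideal. The cases $I=0$ and $I=S$ are trivial, since then $\overline I=I$. For $0\neq I\subsetneq S$, write $I=hJ$ as in Lemma~\ref{lem:gcd-reduction}, where $h$ is the homogeneous gcd of a minimal homogeneous generating set. By that lemma, $\reg(\overline I)\leq\reg(I)$ is equivalent to $\reg(\overline J)\leq\reg(J)$. So it suffices to treat $J$, whose minimal generators have no common factor.

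Next we split by the height of $J$. If $J=S$, Proposition~\ref{prop:principal-mprimary} already gives the inequality. Otherwise $J$ is proper, and we claim $\operatorname{ht}(J)\geq2$. Indeed, if $\operatorname{ht}(J)\leq1$, then $J$ lies in some height-one prime, and since $S$ is a UFD every height-one prime is principal, say $(p)$ with $p$ irreducible. Because $J$ is homogeneous, its minimal primes are homogeneous, so $p$ can be taken homogeneous. Then $p$ divides every minimal generator of $J$, contradicting that their gcd is $1$.

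Since $\dim S=3$, the remaining possibilities are $\operatorname{ht}(J)=3$ or $\operatorname{ht}(J)=2$. If $\operatorname{ht}(J)=3$, then $J$ is $\mathfrak m$-primary, and Proposition~\ref{prop:principal-mprimary} applies to $I$ directly. If $\operatorname{ht}(J)=2$, the hypothesis gives $\reg(\overline J)\leq\reg(J)$. Transporting this back via Lemma~\ref{lem:gcd-reduction} yields $\reg(\overline I)\leq\reg(I)$.

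The only step requiring care is the claim that a gcd-free proper ideal has height at least $2$. It relies on height-one primes of the UFD $S$ being principal, and on choosing a homogeneous generator for that prime. Everything else is bookkeeping with the earlier lemmas.
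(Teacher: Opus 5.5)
Your proposal is correct and matches the paper's proof: you reduce via $I=hJ$ and Lemma~\ref{lem:gcd-reduction}, dispose of $J=S$ and the $\mathfrak m$-primary case with Proposition~\ref{prop:principal-mprimary}, and use the UFD property to show that a gcd-free proper $J$ has height at least $2$. The only differences are that you bound the height first and then split into heights $3$ and $2$, and that you justify more carefully why the height-one prime has a homogeneous generator (a point the paper takes for granted, and which is harmless since any divisor of a nonzero homogeneous polynomial is homogeneous anyway).
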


\begin{proof}
The cases $I=0$ and $I=S$ are immediate. Assume
$0\neq I\subsetneq S$.
Write $I=hJ$, where $h$ is the homogeneous gcd of the minimal generators
of $I$. If $J=S$, then $I$ is principal; if $J$ is
$\mathfrak m$-primary, Proposition~\ref{prop:principal-mprimary} applies.
Assume that $J$ is proper and not $\mathfrak m$-primary. Its minimal
generators have no nonconstant common factor. If a height-one homogeneous
prime contained $J$, then, since $K[x,y,z]$ is a UFD, its generator would
divide every minimal generator of $J$. Hence $\height J\geq2$. Since
$J$ is not $\mathfrak m$-primary, $\height J\neq3$, and therefore
$\height J=2$. The inequality for $I$ now follows from that for
$J$ by Lemma~\ref{lem:gcd-reduction}.
\end{proof}
From now on,
$J\subseteq S$ denotes a homogeneous height-two ideal.

\begin{lemma}\label{lem:three-variable-cm}
Set $L=\overline J$ and $I=L^{\sat}$. Then $I\neq S$ and $S/I$ is
Cohen--Macaulay of dimension one.
\end{lemma}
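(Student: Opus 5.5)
We need to show that $I=L^{\sat}\neq S$ and that $S/I$ is one-dimensional Cohen--Macaulay, where $J\subseteq K[x,y,z]$ is homogeneous of height two. The plan is to compute $\dim(S/I)$ first and then check the depth condition, using the standing facts recorded in the proof of Theorem~\ref{thm:criterion-main}.

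\textbf{Step 1: $I\neq S$ and $\dim(S/I)=1$.} Integral closure preserves radicals, so $\sqrt{L}=\sqrt{J}$ and $\height L=2$. Hence $\dim(S/L)=1$, and $L$ is not $\mathfrak m$-primary. Saturation removes only the $\mathfrak m$-primary component, and it does not change the localization of $L$ at any prime $\mathfrak p\neq\mathfrak m$. Choose a minimal prime $\mathfrak p$ of $L$ of height two. Then $\mathfrak p\neq\mathfrak m$, so
\[
I_{\mathfrak p}=L_{\mathfrak p}\subseteq\mathfrak pS_{\mathfrak p}.
\]
Therefore $I\subseteq\mathfrak p$. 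In particular $I\neq S$, and $\dim(S/I)\geq1$. Conversely, $L\subseteq I$ gives $\dim(S/I)\leq\dim(S/L)=1$. So $\dim(S/I)=1$.

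\textbf{Step 2: $\depth(S/I)\geq1$.} By definition of saturation, $H^0_{\mathfrak m}(S/I)=I^{\sat}/I=0$ (saturation is idempotent, so $I^{\sat}=I$). Hence $\mathfrak m$ is not an associated prime of $S/I$. Since $S/I\neq0$, prime avoidance produces a homogeneous linear form (after extending $K$ if necessary) or an element of $\mathfrak m$ that is a nonzerodivisor on $S/I$. More directly, $\depth(S/I)\geq1$ is equivalent to $H^0_{\mathfrak m}(S/I)=0$, with no element needed. Combined with $\depth(S/I)\leq\dim(S/I)=1$, this gives $\depth(S/I)=\dim(S/I)=1$, so $S/I$ is Cohen--Macaulay of dimension one.

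I do not expect any step to be a real obstacle. The only point needing care is Step 1, specifically checking that saturation does not make the ideal trivial. This is where we use that $J$ is not $\mathfrak m$-primary, i.e.\ that $\height J=2$ rather than $3$, so that $L$ has a minimal prime of height two surviving saturation. The Cohen--Macaulay property is automatic in dimension one once $\mathfrak m\notin\operatorname{Ass}(S/I)$.
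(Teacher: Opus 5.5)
Your proof is correct and follows the paper's argument essentially step by step: you use $\sqrt L=\sqrt J$ to get a height-two minimal prime $\mathfrak p\neq\mathfrak m$ that survives saturation (so $I\neq S$ and $\dim(S/I)=1$), and then $H^0_{\mathfrak m}(S/I)=0$ for the saturated ideal $I$ to get $\depth(S/I)\geq 1=\dim(S/I)$. The aside about prime avoidance and extending $K$ is unnecessary, since the local-cohomology characterization of depth already finishes the argument, exactly as in the paper.
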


\begin{proof}
Since $\sqrt L=\sqrt J$, the ideal $L$ has a height-two minimal prime
$\mathfrak p$. As $\mathfrak p\neq\mathfrak m$, localization at
$\mathfrak p$ is unchanged by saturation, so $\mathfrak p$ remains a
minimal prime of $I$. Thus $I\neq S$ and $\dim(S/I)=1$. Moreover,
$I$ is saturated, so $H^0_{\mathfrak m}(S/I)=0$. Hence
$\depth(S/I)\geq1=\dim(S/I)$, proving the assertion.
\end{proof}

\begin{proof}[Proof of Corollary~\ref{thm:three-vars-main}]
After Proposition~\ref{prop:reduction-height-two}, only the height-two
gcd-free case remains. Lemma~\ref{lem:three-variable-cm} and
Corollary~\ref{cor:cm-criterion-main} give the result.
\end{proof}

\section{The four-variable counterexample}\label{sec:counterexample}

Let $S=K[x,y,z,w]$ and
\[
        I=(xz^2,\ yz^2,\ y^2z,\ y^2w).
\]

\begin{theorem}\label{thm:four-var-main}
One has
\[
        \overline I=I+(xyzw),
        \qquad
        \reg(I)=3,
        \qquad
        \reg(\overline I)=4.
\]
In particular, $\reg(\overline I)>\reg(I)$.
\end{theorem}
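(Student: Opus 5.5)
The plan has three parts: compute $\overline I$ with the Newton polyhedron criterion, then compute each regularity by splitting the ideal into simpler pieces.

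\textbf{Integral closure.} By Lemma~\ref{lem:newton-criterion}, the exponent vectors of the generators are
\[
(1,0,2,0),\quad (0,1,2,0),\quad (0,2,1,0),\quad (0,2,0,1)
\]
in the coordinates $(x,y,z,w)$. First, the monomial $xyzw$ lies in $\overline I$, since $xyzw$ squared equals $x^2y^2z^2w^2$, which is divisible by $(xz^2)(y^2w)$; its exponent $(1,1,1,1)$ is the midpoint of $(1,0,2,0)$ and $(0,2,0,1)$ plus $(0,0,0,\tfrac12)$. For the reverse inclusion I will write $\NP(I)$ as an intersection of half-spaces $\langle c,\alpha\rangle\ge 1$ with $c\in\mathbb R^4_{\ge0}$. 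From the facet normals I compute, I will check that every lattice point of $\NP(I)$ is either a multiple of a generator or of $xyzw$. Since the exponent of $w$ is unrestricted only together with $y^2$ or with $xyz$, a case analysis on whether $\alpha_w=0$ should suffice. If $\alpha_w=0$, we are in $\overline{(xz^2,yz^2,y^2z)}$ in $K[x,y,z]$, and this ideal is already integrally closed; that is a quick check on $\beta$ with $\beta_z\le 1$ or $\beta_y\le 1$. If $\alpha_w\ge1$, the face inequalities force $\alpha_y\ge2$, or $\alpha_x,\alpha_y,\alpha_z\ge1$, or a generator without $w$ divides $x^\alpha$.

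\textbf{Regularities.} For $\reg(I)=3$, I will use $I=z(xz,yz,y^2)+(y^2w)$, or better the decomposition $I=(z^2,y^2)\cap(x,y,\ldots)$. Concretely, I plan to compute a primary decomposition and then apply the short exact sequence
\[
0\to S/(I_1\cap I_2)\to S/I_1\oplus S/I_2\to S/(I_1+I_2)\to 0,
\]
where each piece is a complete intersection, or a complete intersection plus a variable, whose regularity is explicit. Alternatively, and more robustly, I will write down the minimal free resolution via the Taylor or Lyubeznik complex, or via the splitting
\[
I=(xz^2,yz^2,y^2z)+y^2w\cdot S,
\]
using the Eliahou--Kervaire-type mapping cone $0\to S/(J:y^2w)(-3)\to S/J\to S/I\to0$ with $J=(xz^2,yz^2,y^2z)$. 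Here $J:y^2w=J:y^2=(z)$. The ideal $J=z(xz,yz,y^2)$ and $(xz,yz,y^2)$ has a linear resolution, so $\reg(J)=3$, and the cone gives $\reg(I)\le\max\{3,\ 3+\reg(S/(z))+1\}=3$. Equality holds because the generators have degree $3$.

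For $\overline I$, I will use the same cone with $J'=I$ and the new generator $xyzw$ of degree $4$. Then $I:xyzw=(z,y)$, and
\[
0\to S/(y,z)(-4)\to S/I\to S/\overline I\to0
\]
gives $\reg(S/\overline I)\le\max\{\reg(S/I),\ \reg(S/(y,z))+4-1\}=3$, so $\reg(\overline I)\le 4$. For the lower bound I will show the connecting map does not cancel the degree-$5$ syzygy: a minimal first syzygy $y\cdot xyzw$ versus $xz^2\cdot y^2w/\ldots$ occurs in degree $5$ and involves only degree-$5$ terms. I will check directly that the relations $y\cdot xyzw-x\cdot(y^2w)z$ and $z\cdot xyzw-xz^2\cdot yw$ yield a minimal syzygy of degree $5$ that is not reachable from $I$'s syzygies, which are of degree at most $4$. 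That forces $\beta_{1,5}(\overline I)\neq0$, hence $\reg(\overline I)\ge 5-1=4$.

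\textbf{Saturatedness and main obstacle.} Saturatedness of $I$ and $\overline I$ follows from $\depth>0$, which follows from the cones above, since all pieces have positive depth. The main obstacle is the rigorous verification that $\overline I$ has no further monomials, that is, the Newton polyhedron facet computation; I would first try computing the facets explicitly and checking lattice points in the bounded region $\alpha\le(1,2,2,1)$.
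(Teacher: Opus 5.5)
Your strategy is the paper's. You use the Newton polyhedron criterion for $\overline I$ and iterated colon ideals for the regularities; your mapping cones with $J:y^2w=(z)$ and $I:xyzw=(y,z)$ carry exactly the linear-quotient data the paper feeds into Conca--Herzog's lemma. The gap is that the one step with real content, $\overline I\subseteq I+(xyzw)$, is only promised. You never write down the inequalities that are supposed to force $\alpha_y\ge2$, or $\alpha_x,\alpha_y,\alpha_z\ge1$, or divisibility by a $w$-free generator, and you flag this yourself as the main obstacle.

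You do not need the facets of $\NP(I)$. Any inequality with nonnegative coefficients that holds at the four exponent vectors is valid on $\NP(I)$. The paper uses
\[
a+b\ge1,\quad c+d\ge1,\quad b+c\ge2,\quad b+2c+2d\ge4,\quad 2a+2b+c\ge4 .
\]
Suppose $x^ay^bz^cw^d\in\overline I$ is divisible by no generator of $I$. If $b\ge2$, then $c=d=0$, contradicting $c+d\ge1$. If $b=0$, then $a\ge1$ and $c\ge2$, so $xz^2$ divides. Hence $b=1$, then $c=1$ (otherwise $yz^2$ divides), and the last two inequalities give $a\ge1$ and $d\ge1$.

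In your case $\alpha_w\ge1$, the three inequalities $a+b\ge1$, $b+c\ge2$, $2a+2b+c\ge4$ already do this. Your case $\alpha_w=0$, via the integrally closed ideal $z(xz,yz,y^2)=z\bigl((x,y)\cap(y^2,z)\bigr)$, is fine. Separately, $(1,1,1,1)$ is not the midpoint of $(1,0,2,0)$ and $(0,2,0,1)$ plus $(0,0,0,\tfrac12)$. The midpoint is $(\tfrac12,1,1,\tfrac12)$, so you must add $(\tfrac12,0,0,\tfrac12)$. This is harmless, since your squaring argument already shows $xyzw\in\overline I$.

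In the regularity part, the bound you write for $I$, namely $\max\{3,\,3+\reg(S/(z))+1\}$, equals $4$, not $3$. The sequence $0\to S/(z)(-3)\to S/J\to S/I\to0$ actually gives $\reg(S/I)\le\max\{\reg(S/J),\,\reg(S/(z))+3-1\}=2$, i.e.\ $\reg(I)\le3$. This is the form you correctly use for $\overline I$.

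For the lower bound $\reg(\overline I)\ge4$, your syzygy argument is correct in substance but unnecessary, and its first sentence is garbled. Since $xyzw$ is divisible by no generator of $I$, it is a minimal generator of $\overline I$ of degree $4$. Regularity is at least the largest degree of a minimal generator, exactly as you argued for $\reg(I)\ge3$. With these repairs your proof coincides with the paper's.
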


\begin{proof}
First $xyzw\in\overline I$, since $(xyzw)^2=x^2y^2z^2w^2$
is divisible by $(xz^2)(y^2w)$, and hence $(xyzw)^2\in I^2$.

We claim that no other minimal monomial is added. By
Lemma~\ref{lem:newton-criterion}, if $x^ay^bz^cw^d\in\overline I$, then its
exponent vector lies in $\NP(I)$. The following inequalities are valid on
$\NP(I)$, since they are
valid on the exponent vectors
\[
        (1,0,2,0),\ (0,1,2,0),\ (0,2,1,0),\ (0,2,0,1)
\]
and have nonnegative coefficients:
\[
        a+b\geq 1,\qquad
        c+d\geq 1,\qquad
        b+c\geq 2,\qquad
        b+2c+2d\geq 4,\qquad
        2a+2b+c\geq 4.
\]
These inequalities force any monomial in $\overline I$ which is not already
divisible by one of $xz^2,yz^2,y^2z,y^2w$ to have
$a,b,c,d\geq 1$. Hence it is divisible by $xyzw$. Therefore
\[
        \overline I=I+(xyzw).
\]

It remains to compute regularities. We order the generators of $I$ as
\[
        g_1=xz^2,\qquad
        g_2=yz^2,\qquad
        g_3=y^2z,\qquad
        g_4=y^2w.
\]
Then
\[
        (g_1):g_2=(x),\qquad
        (g_1,g_2):g_3=(z),\qquad
        (g_1,g_2,g_3):g_4=(z).
\]
Since $I$ has linear quotients and is generated in degree $3$,
\cite[Lemma~4.1]{ConcaHerzog} gives $\reg(I)=3$.

For $\overline I$, add $g_5=xyzw$ to $g_1,\ldots,g_4$. The first three
colon ideals are as above, and $(g_1,g_2,g_3,g_4):g_5=(y,z)$, so
$\overline I$ also has linear quotients. Since $xyzw$ is not divisible by
any generator of $I$, it is minimal, and the generators of $\overline I$
have degrees $3$ and $4$. By \cite[Lemma~4.1]{ConcaHerzog},
$\reg(\overline I)=4$.
\end{proof}

\begin{remark}\label{rem:saturated-counterexample}
Both ideals in Theorem~\ref{thm:four-var-main} are saturated. Indeed, direct
monomial intersections give
\[
        I=(x,y)\cap (y^2,z^2)\cap (z,w).
\]
Using $\overline I=I+(xyzw)$, one likewise checks that
\[
\begin{split}
\overline I={}&(x,y)\cap(z,w)\cap(y,z^2)\cap(y^2,z)\\
&\cap(x,y^2,z^2)\cap(y^2,z^2,w).
\end{split}
\]
Each component is primary with radical different from $\mathfrak m$, hence saturated. Therefore both ideals are saturated, so the regularity jump is not caused by an $\mathfrak m$-primary component.
\end{remark}
\begin{remark}[Minimality with respect to the number of generators]
\label{rem:minimal-number-generators}
Although Theorem~\ref{thm:four-var-main} is sharp with respect to the
number of variables, its counterexample is not minimal with respect to the
number of generators. Indeed, let
\[
T=K[x,y,z,w,t]
\qquad\text{and}\qquad
J=(wt^{2},x^{2}t,x^{2}yz).
\]
The Hilbert--Burch resolution
\[
0\longrightarrow T(-5)^{2}
\longrightarrow T(-3)^{2}\oplus T(-4)
\longrightarrow J
\longrightarrow 0
\]
shows that $\reg(J)=4$. On the other hand, a direct application of
Lemma~\ref{lem:newton-criterion} gives
$\overline J=J+(xyzwt)$. Since $xyzwt$ is a minimal generator of $\overline J$, it follows that
$\reg(\overline J)\geq 5>4=\reg(J)$.
Thus three minimal generators already suffice.
\end{remark}
\section*{Acknowledgments}

The author thanks Hailong Dao for his encouragement and Sreehari Suresh Babu
for suggesting the problem and for many helpful discussions.


\begin{thebibliography}{9}

\bibitem{ConcaHerzog}
A.~Conca and J.~Herzog,
\newblock \emph{Castelnuovo--Mumford regularity of products of ideals},
\newblock Collect. Math. \textbf{54} (2003), 137--152.

\bibitem{CuiGongZhu}
Y.~Cui, C.~Gong, and G.~Zhu,
\newblock \emph{The regularity of monomial ideals and their integral closures},
\newblock preprint, arXiv:2509.15119.

\bibitem{HunekeSwanson}
C.~Huneke and I.~Swanson,
\newblock \emph{Integral Closure of Ideals, Rings, and Modules},
\newblock London Mathematical Society Lecture Note Series, vol.~336,
Cambridge University Press, Cambridge, 2006.

\bibitem{Javadekar}
O.~Javadekar,
\newblock \emph{A comparison of the regularity of certain classes of monomial ideals and their integral closure},
\newblock Arch. Math. \textbf{126} (2026), 351--363,
\newblock \href{https://doi.org/10.1007/s00013-026-02225-2}{doi:10.1007/s00013-026-02225-2}.

\bibitem{KuronyaPintye}
A.~K\"uronya and N.~Pintye,
\newblock \emph{Castelnuovo--Mumford regularity and log-canonical thresholds},
\newblock preprint, arXiv:1312.7778.

\end{thebibliography}
\end{document}